\newtheorem{tma}{Theorem}[section]
\newtheorem{defn}{Definition}
\newtheorem{lema}{Lemma}
\newtheorem{step}{Step}
\title{Smoothening cone points with Ricci flow}
\author[Daniel Ramos]{Daniel Ramos \\ \\ \tiny{September 26, 2011}}
\begin{document}

\begin{abstract}
We consider Ricci flow on a closed surface with cone points. The main result is: given a (nonsmooth) cone metric $g_0$ over a closed surface there is a smooth Ricci flow $g(t)$ defined for $(0,T]$, with curvature unbounded above, such that $g(t)$ tends to
$g_0$ as
$t\rightarrow 0$. This result means that Ricci flow provides a way for instantaneously smoothening cone points.
We follow the argument of P. Topping in \cite{Top1} modifying his reasoning for cusps of negative curvature; in that sense we can
consider cusps as a limiting zero-angle cone, and we generalize to any angle between 0 and $2\pi$.
\end{abstract}

\thanks{Universitat Autònoma de Barcelona, Departament de Matemàtiques - 80193 Bellaterra, Barcelona (Spain)- \\ E-mail: \texttt{dramos@mat.uab.cat} . }

\maketitle

\section{Introduction}
Ricci flow on closed surfaces was studied first by R. Hamilton \cite{Ham1} and B. Chow \cite{Chow1}, proving that any smooth closed riemannian surface $(\mathcal M, g_0)$ admits a volume-normalized Ricci flow $g(t)$, $t\in[0,T]$, with uniformly bounded curvature, having $g_0$ as initial condition, $g(0)=g_0$. This flow is unique and well defined and converges to a constant curvature metric, \cite{Chenlutian} (See also \cite{ChowKnopf}). The unnormalized flow may develop finite-time singularities in the case of a sphere, when the curvature tends globaly to infinity as well as the diameter tends to zero. Some analogous results were obtained for Ricci flow on orbifold surfaces by L-F. Wu and Chow \cite{Wu}, \cite{Chowwu}, \cite{Chow2}. They assume an equivariant definition of the Ricci flow under the action of the isotropy group of the cone points. Therefore, the only nontrivial case are bad orbifolds, which do not admit a
smooth manifold as global branched covering space, so the Ricci flow cannot be lifted there. They prove that bad orbifolds (the teardrop
and the football) admit a (normalized) Ricci flow converging to a soliton solution.

The alternative consideration of the Ricci flow just acting on the smooth part of the orbifold leads to consider the Ricci flow on an
open, noncomplete manifold, which does not fit in the classical theory of Hamilton, so existence and uniqueness might be lost. H. Yin
obtained, however, an existence theorem for Ricci flow on cone surfaces \cite{Yin}, with uniformly bounded curvature, defined on the
smooth part of the surface, and preserving the conical structure of each singular point. This is the analogous to the classical and
orbifold Ricci flow. On a different approach, Topping and G. Giesen \cite{Topgie} obtained an existence theorem for Ricci flow on
incomplete surfaces, which becomes instantaneously complete and has uniformly bounded curvature, which exposes the nonuniquenes of
solutions. 

In another work, Topping \cite{Top1} considered a complete open surface with cusps of negative curvature and proved the
existence of a instantaneously smooth Ricci flow with unbounded curvature, a ``smoothening flow'' which erases instantaneously the cusps.
This requires a generalized notion of initial metric for a flow, that we will use thorough the paper:

\begin{defn}[Cf. \cite{Top1} Definition 1.1]\label{def1}
Let $\mathcal M$ be a smooth manifold, and $p_1,\ldots,p_n\in\mathcal M$. Let $g_0$ be a riemannian metric on $\mathcal
M\setminus\{p_1,\ldots,p_n\}$ and let $g(t)$ be a smooth Ricci flow on $\mathcal M$ for $t\in(0,T]$. We say that $g(t)$ has initial
condition $g_0$ if 
$$g(t) \longrightarrow g_0\ \mathrm{as}\ t\rightarrow 0$$
smoothly locally on $\mathcal M\setminus\{p_1,\ldots,p_n\}$.
\end{defn}

The technique for this result consists in capping the cusps of the original metric $g_0$ with a smooth part near the cusp point, in an
increasing sequence of metrics, each term with a further and smaller capping. This sequence of smooth metrics gives rise to a sequence of
(classical) Ricci flows, and the work consists in proving that this sequence has a limiting Ricci flow on $\mathcal M$ which has $g_0$ as
initial condition in the sense of Definition \ref{def1}. Our work proves that this technique works equally well on cone surfaces, using
truncated or ``blunt'' cones as approximations for a cone point. In our setting, cusps would be seen as a limiting case of a zero-angle
cone. This provides an instantaneously smooth Ricci flow that smooths out the cone points of a cone surface.

The paper is organized as follows: in sections \ref{sectionconepoints} and \ref{sectionRFconesurf} we review the notions of cone surface and
Ricci flow on cone surfaces, and state the two main theorems of the paper (existence and uniqueness of the smoothening flow). In section
\ref{sectiontruncating} we build the truncated cones that will serve us as
approximations of a cone point and in section \ref{sectionbarriers} we build upper barriers that, applied to our truncated cones, will give
us control on the
convergence of the sequence. In section \ref{sectionexistproof} we put together the preceding lemmas to prove the existence theorem; and
finally in section \ref{sectionuniqueproof}
we prove the uniqueness theorem.

\emph{Acknowledgements:} The author was partially supported by Feder/Micinn through the Grant MTM2009-0759 and by the Fundaci\'o Ferran Sunyer i Balaguer. He also wishes to thank his advisor, Joan Porti, for all his guidance.

\section{Cone points}\label{sectionconepoints}
Cone surfaces are topological surfaces equipped with a riemannian metric which is smooth everywhere except on some discrete set of points
(\emph{cone points}) that look like the vertex of a cone. Typical examples include orbifolds, where a group acting by isometries leads to
identification of different directions as seen from a fixed point. In the case of two dimensions, orientable orbifolds consist locally in
the quotient of a smooth manifold by perhaps the action of a cyclic group acting by rotations, leading to the rise of singular points at the
center of the rotations. The space of directions is no longer a metric circle of length $2\pi$ but a metric circle of length
$\frac{2\pi}{n}$ (this is the cone angle). General 2-dimensional cone points include all angles, not only submultiples of $2\pi$, although
we will restrict our attention to cone angles less than $2\pi$. A useful fact in the case of two dimensions is that an isolated cone point
$p$
has a neighbourhood $U$ which admits a chart with coordinates on the unit disc $D$. Precisely, there is a diffeomorphism $U\setminus\{p\}
\rightarrow D\setminus\{0\}$ that allows us to define a metric tensor on the coordinates of $D$ (which will be undefined on the origin). We
may even take isothermal coordinates $(x,y)$ (or in polar coordinates $(r,\theta)$, or in complex notation $z=x+iy$), that is, a local chart
where  the metric can be written as
$$g=e^{2u}(dx^2 + dy^2)=e^{2u}(dr^2 + r^2 d\theta^2)=e^{2u}|dz|^2$$
with $u$ a real-valued function of the coordinates (possibly undefined in $z=0$). 
The main example is the metric of the euclidean cone. Consider $\mathbb R^2$ and trace two half-lines from the origin, meeting at angle
$\alpha$. Consider the metric space resulting of identification of the two half lines, and let us bring it a riemannian metric. Using polar
coordinates, assume that there exist a riemannian metric of the form
$$g=\phi^2(r)\ (dr^2+r^2 d\theta^2)$$
where $\phi$ should depend also on $\theta$, but we try just depending on $r$ due to symmetry. We look for a function $\phi$ that produces a
flat metric (zero curvature) and a cone angle at the origin. Let us consider a parallel curve $r=x=const$. On the one hand, its length is 
the
angle times the radius (since it is a region of the plane),
$$L=\alpha \int_0^x \phi(r) dr.$$
On the other hand, the length of the curve measured on the metric is
$$L=\int_0^{2\pi} x \phi(x) d\theta = 2\pi x \phi(x).$$
So
$$\alpha \int_0^x \phi(r) dr = 2\pi x \phi(x)$$
and denoting $\Phi'(r)=\phi(r)$ we obtain
$$\alpha \Phi(x) = 2\pi x \Phi'(x)$$
and solving this ODE,
$$\phi(r) = \Phi'(r) = \frac{\alpha}{2\pi} r^{\frac{\alpha}{2\pi}-1}$$
Renaming $\beta =\frac{\alpha}{2\pi}-1$, with $-1<\beta\leq 0$, the cone metric of the euclidean cone is
$$g=(\beta+1)^2 r^{2\beta} (dr^2 + r^2 d\theta^2)$$

Having seen the prototype of cone point, the following definition is justified:
\begin{defn}
A cone surface $(\mathcal M, (p_1,\ldots,p_n),g)$ is a topological surface $\mathcal M$ and $p_1,\ldots, p_n\in \mathcal M$ equipped
with a smooth riemannian metric $g$ on $\mathcal M\setminus \{p_1,\ldots, p_n\}$, such that every point $p_i$ admits an open
neighbourhood $U_i$, and diffeomorphism $U_i\setminus \{p_i\} \rightarrow D\setminus \{0\}$ where the metric on the
coordinates of $D\setminus\{0\}$ is written as
$$g=e^{2(a_i+\beta_i \ln r)} |dz|^2$$
where $a_i:D \rightarrow \mathbb R$ is a bounded and continuous function on the whole disc, and $-1<\beta_i\leq0$.
\end{defn}
The \emph{cone angle} at $p_i$ is $\alpha_i:=2\pi(\beta_i+1)$. We say that $\mathcal M$ has bounded curvature if it has bounded
riemannian curvature on the smooth part of $ \mathcal M$, although it has $+\infty$ curvature in the sense of Alexandrov at the cone
points (provided the angle is less than $2\pi$).

\section{Ricci flow on cone surfaces}\label{sectionRFconesurf}

A Ricci flow is an evolution equation for the metric tensor of a manifold, $\frac{\partial}{\partial t}g =-2 Ric$. In the 2-dimensional case, the equation is 
$$\frac{\partial}{\partial t}g = -2 K g$$
where $K$ is the Gauss curvature of the surface. We may take a chart with isothermal coordinates $g=e^{2u}|dz|^2$ and on that setting the Ricci flow equation turns
$$\frac{\partial}{\partial t}u = e^{-2u}\Delta u = -K$$
where $\Delta$ is the usual euclidean laplacian.

The existence of solutions for this equation (at least for a short time) is given by the classical theorems on Ricci flow when the initial metric is smooth over a closed surface. However, cone surfaces do not fit on these theorems since the flow is defined on an open noncomplete manifold and the cone point (the origin of the coordinate chart) acts as a sort of boundary of the domain. We will appeal to a theorem by H. Yin (\cite{Yin} Theorem 1.1) that ensures the existence for a short time of at least one solution for the Ricci flow on cone surfaces, and such flow preserves the cone angles of each cone point of the surface. J. Isenberg, R. Mazzeo, and N. Sesum \cite{IseMazSes} have annonced another approach to the existence of the flow with cone singularities. What we are constructing on this paper is a different solution to the same equation: a Ricci flow which is instantaneously smooth for any $t>0$, that satisfies the Ricci equation for any $t\in(0,T]$, and that converges to the initial nonsmooth cone metric as $t\rightarrow 0$. Despite the nonuniquenes shown by this result, there is certain uniqueness provided we restrict to certain class of flows.

Now we state the two main theorems of the paper:
\begin{tma}\label{tma1}
Let $(\mathcal M,(p_1\ldots p_n),g_0)$ be a closed cone surface; with bounded curvature. There exists a Ricci flow $g(t)$ smooth on the
whole $\mathcal M$, defined for $t\in (0,T]$ for some $T$, and such that
$$g(t)\underset{t\rightarrow 0}{\longrightarrow} g_0.$$
Furthermore, this Ricci flow has curvature unbounded above and uniformly bounded below over time.
\end{tma}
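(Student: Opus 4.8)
The plan is to carry over P. Topping's approximation scheme from \cite{Top1} to cone points, using ``blunt cones'' in place of his capped cusps. First I would fix, for each $i$, a sequence of radii $\varepsilon_k\downarrow 0$ and modify $g_0$ only inside $\{r<\varepsilon_k\}$ near $p_i$, replacing the singular conformal factor $e^{2(a_i+\beta_i\ln r)}$ by a smooth bounded one that agrees with it for $r\ge\varepsilon_k$; this is the truncated cone of section \ref{sectiontruncating}. The construction is arranged so that the resulting metrics $g_0^k$, now smooth on the \emph{closed} surface $\mathcal M$, satisfy $g_0^1\le g_0^2\le\cdots$, converge to $g_0$ in $C^\infty_{loc}(\mathcal M\setminus\{p_1,\dots,p_n\})$, and have Gauss curvature uniformly bounded below, $K(g_0^k)\ge -C_0$. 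Each $g_0^k$ then admits, by Hamilton--Chow, a unique smooth Ricci flow $g^k(t)=e^{2u^k}|dz|^2$ with $g^k(0)=g_0^k$ on a maximal interval $[0,T_k)$, governed in isothermal coordinates by $\partial_t u^k=e^{-2u^k}\Delta u^k$.

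Second, I would invoke the upper barriers of section \ref{sectionbarriers}. The key analytic input is a supersolution $\bar u(\cdot,t)$ of $\partial_t u=e^{-2u}\Delta u$, defined for $t\in(0,\delta]$, that (i) dominates every $u_0^k$, (ii) reproduces — or overshoots — the logarithmic cone singularity $\beta_i\ln r$ near each $p_i$ as $t\to0$, while staying finite and smooth for each fixed $t>0$, and (iii) converges, as $t\to0$, to the conformal factor of $g_0$ on $\mathcal M\setminus\{p_1,\dots,p_n\}$. Granting this, the comparison principle for the $2$-dimensional flow (applied to the conformal factor, comparing on $[\tau,\delta]$ and letting $\tau\to0$) gives $u^k(t)\le\bar u(t)$ for all $k$ and all $t$ in the common interval; in particular $T_k\ge T:=\delta$ and $g^k(t)\le\bar g(t):=e^{2\bar u}|dz|^2$ uniformly in $k$ on $\mathcal M\times(0,T]$. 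Monotonicity in $k$ (again by comparison, from $g_0^k\le g_0^{k+1}$) then yields an increasing limit $g(t)=\lim_k g^k(t)$ with $g^1(t)\le g(t)\le\bar g(t)$, hence two-sided bounds $0<c\le e^{2u^k(t)}\le C$ on $\mathcal M\times[\tau,T]$ for every $\tau>0$. On such regions the equation is uniformly parabolic, so interior-in-$t$ parabolic estimates give uniform $C^\infty_{loc}$ bounds; therefore $g^k\to g$ in $C^\infty_{loc}(\mathcal M\times(0,T])$ and $g(t)$ is a smooth Ricci flow on the whole closed surface for $t\in(0,T]$, smooth also at the $p_i$.

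Third, the initial condition and the curvature claims. For the lower side, $g(t)\ge g^k(t)$ and $g^k(t)\to g_0^k$ as $t\to0^+$, so $\liminf_{t\to0}g(t)\ge\sup_k g_0^k=g_0$; for the upper side, $g(t)\le\bar g(t)\to g_0$ on $\mathcal M\setminus\{p_1,\dots,p_n\}$. Squeezing gives $g(t)\to g_0$ locally uniformly there, and the parabolic estimates — now uniform up to $t=0$ on sets bounded away from the cone points, since the barrier controls $g(t)$ there — upgrade this to smooth local convergence, which is precisely Definition \ref{def1}. The curvature lower bound survives the limit: in $2$D one has $\partial_t K=\Delta K+2K^2$, so $K(g^k(t))\ge -C_0$ is preserved by the maximum principle and passes to $g$, giving $K(g(t))\ge -C_0$ for all $t$. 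Conversely the curvature cannot be bounded above uniformly on $(0,T]$: a bound $|K(g(t))|\le C$ there would make $g(t)$ and $g(T)$ mutually comparable up to the factor $e^{2CT}$, forcing $g_0=\lim_{t\to0}g(t)$ to be comparable near each $p_i$ to the smooth metric $g(T)$ — impossible as soon as some $\beta_i<0$, since then $g_0$ grows like $r^{2\beta_i}$ at $p_i$.

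The main obstacle is step two, and specifically the construction in section \ref{sectionbarriers} of the upper barrier: one needs a (semi-)explicit supersolution of the fast-diffusion-type equation $\partial_t u=e^{-2u}\Delta u$ that carries the cone's logarithmic singularity $\beta_i\ln r$ in the limit $t\to0$ yet is finite for $t>0$, \emph{and} does so uniformly across all cone angles $\alpha_i\in(0,2\pi)$ — this is exactly where Topping's cusp barrier (morally the $\beta\to-1$ endpoint) has to be replaced by one adapted to general $\beta\in(-1,0]$. A lesser technical nuisance is that the comparison principle must be run on the half-open interval $(0,T]$ rather than $[0,T]$, which one handles in the standard way by comparing on $[\tau,T]$ and exploiting the barrier's behaviour as $t\to0$.
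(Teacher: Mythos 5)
Your overall scheme (truncate the cones into an increasing sequence of smooth metrics with a uniform lower curvature bound, flow each one, compare, and pass to an increasing limit controlled by an upper barrier near the cone points) is exactly the paper's. But there is one genuine gap, concentrated in what you ask of the upper barrier. Your property (iii) --- that the supersolution $\bar u(\cdot,t)$ converge to the conformal factor of $g_0$ as $t\to0$ away from the cone points --- is not what Lemma \ref{lema2} delivers, and it is doing essential work in your argument: it is the entire upper half of your squeeze $g^k(t)\le g(t)\le\bar g(t)\to g_0$. The barrier actually constructed in Section \ref{sectionbarriers} is a ``blunt cone'' $U(r,t)$ equal to a \emph{fixed} hyperbolic cone factor $v_1(r)+C$ for $r>\lambda(t)$ and a shrinking spherical cap for $r\le\lambda(t)$; its only job is to show $\sup u^k(\cdot,t)\le B+\frac{\beta}{2(\beta+1)}\ln t<\infty$ for each fixed $t>0$, i.e.\ that the increasing limit has finite conformal factor at the cone point and hence extends smoothly there. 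As $t\to 0$ this barrier tends to $v_1+C$, not to $u_0$, so it cannot give $\limsup_{t\to0}g(t)\le g_0$. The paper closes that gap with an ingredient your proposal omits entirely: Yin's bounded-curvature conical Ricci flow $g_s(t)$ with $g_s(0)=g_0$, which dominates every $g^k(t)$ by the maximum principle (since $g^k(0)\le g_0=g_s(0)$) and converges back to $g_0$ as $t\to0$. So either you must import Yin's existence theorem as the upper comparison flow, or you must actually construct a barrier satisfying your (iii) --- a substantially harder task than the one Lemma \ref{lema2} solves, and one the paper does not attempt.

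Two smaller points. First, the uniform lower bound on the existence times $T_k$ does not follow from the upper barrier (an upper bound on the metric does nothing to prevent extinction); the paper gets it from the Euler characteristic: for $\chi(\mathcal M)<2$ the flows are immortal, and for $\chi(\mathcal M)=2$ the maximal time is $\mathrm{Area}(g_0^k)/8\pi$, which is nondecreasing in $k$ because $g_0^k\le g_0^{k+1}$. Second, your argument that the curvature is unbounded above is fine (a two-sided bound on $K$ would force $u(t)$ to stay within a bounded distance of the bounded function $u(T)$, contradicting the logarithmic asymptote of $u_0$ at each cone point), and is in fact more explicit than anything in the paper's proof of this clause.
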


\begin{tma}\label{tma2}
Let $\tilde g(t)$ be a Ricci flow on $\mathcal M$, defined for $t\in (0,\delta]$ for some $\delta < T$, such that
$$\tilde g(t)\underset{t\rightarrow 0}{\longrightarrow} g_0$$
and assume that its Gauss curvature is uniformly bounded below. Then $\tilde g(t)$ agrees with the flow $g(t)$ constructed in Theorem
\ref{tma1} for $t\in (0,\delta]$.
\end{tma}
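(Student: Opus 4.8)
The plan is to reduce the statement to an estimate for the single conformal factor comparing $\tilde g(t)$ with the flow $g(t)$ of Theorem \ref{tma1}, and then to run a maximum principle whose only delicate ingredient is the behaviour near the cone points.

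\emph{Reduction to a scalar parabolic equation.} For every fixed $t>0$ the metric $g(t)$ is smooth on the closed surface $\mathcal M$, and Ricci flow on a surface preserves the conformal class; hence the conformal class of $g(t)$ is independent of $t\in(0,T]$, and since $g(t)\to g_0$ locally on $\mathcal M\setminus\{p_1,\dots,p_n\}$ while near each $p_i$ the cone metric $e^{2(a_i+\beta_i\ln r)}|dz|^2$ is conformal to $|dz|^2$ in the disc chart, this common conformal class extends smoothly across the $p_i$. The same applies to $\tilde g(t)$. Therefore for each $t$ we may write $\tilde g(t)=e^{2v(\cdot,t)}g(t)$ with $v(\cdot,t)\in C^{\infty}(\mathcal M)$, and, both flows converging to $g_0$ off the cone points, $v(\cdot,t)\to 0$ as $t\to 0$ locally uniformly on $\mathcal M\setminus\{p_1,\dots,p_n\}$. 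Writing the two Ricci flow equations in an isothermal chart, subtracting, and using the conformal-change formula $K_{\tilde g}=e^{-2v}(K_g-\Delta_g v)$, one obtains
$$\frac{\partial}{\partial t}v=e^{-2v}\Delta_g v+K_g\,(1-e^{-2v})=\Delta_g v+K_{\tilde g}\,(e^{2v}-1),$$
a quasilinear scalar parabolic equation for $v$ whose ``initial datum'' vanishes on $\mathcal M\setminus\{p_1,\dots,p_n\}$. The goal is $v\equiv 0$. Were $K_g$ and $K_{\tilde g}$ bounded this would follow at once by applying the maximum principle to $\sup_{\mathcal M}v$ and to $\inf_{\mathcal M}v$ and then Gronwall's inequality; the whole difficulty lies in the fact that both curvatures are unbounded above near the cone points.

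\emph{Two-sided control near the cone points.} The next step is to show that $v$ actually vanishes in a fixed punctured disc $D_i\setminus\{p_i\}$ around each cone point, which trivialises the maximum principle there. One inequality is essentially free: by the construction in Theorem \ref{tma1}, $g(t)=\sup_k g_k(t)$ where $g_k(t)$ is the classical Ricci flow of a smooth blunt-cone metric with $g_{0,k}\le g_0$ and $g_{0,k}\nearrow g_0$; since $g_{0,k}\le g_0$ while $\tilde g(t)\to g_0$ — with no loss of area at $p_i$, because the conformal factor of $\tilde g$ blows up there as $t\to 0$ whereas that of each $g_{0,k}$ stays bounded — the parabolic comparison principle gives $g_k(t)\le\tilde g(t)$ for all $k$, hence $g(t)\le\tilde g(t)$ globally, i.e. $v\ge 0$. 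The reverse bound near $p_i$ is where the hypothesis $K_{\tilde g}\ge -C$ is essential: from $\partial_t\log(\text{conformal factor of }\tilde g)=-2K_{\tilde g}\le 2C$ one sees that the conformal factor of $\tilde g$ cannot grow faster than $e^{2Ct}$, and, combined with $\tilde g(t)\to g_0$ and the model-cone asymptotics, this places $\tilde g(t)$, for small $t$ and near $p_i$, beneath the family of upper barriers of section \ref{sectionbarriers}; optimising over the barrier parameter squeezes $\tilde g(t)$ back down onto $g(t)$ near $p_i$. Thus $\tilde g(t)=g(t)$ on $D_i\setminus\{p_i\}$ for every $t\in(0,\delta]$.

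\emph{Conclusion.} On the compact set $\mathcal M':=\mathcal M\setminus\bigcup_i D_i$ the curvatures of $g(t)$ and $\tilde g(t)$ converge smoothly to $K_{g_0}$, which is bounded by hypothesis, so they are uniformly bounded there for $t\in(0,\delta]$; moreover $v$ vanishes on $\partial\mathcal M'$ for every $t$ by the previous step, and $v(\cdot,t)\to 0$ as $t\to 0$ on $\mathcal M'$. Hence the maximum principle for the displayed equation on $\mathcal M'\times(0,\delta]$, together with Gronwall, forces $v\equiv 0$ on $\mathcal M'\times(0,\delta]$; combined with the previous step, $v\equiv 0$ on $\mathcal M\times(0,\delta]$, i.e. $\tilde g(t)=g(t)$ there. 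The crux, and the part that must be carried out carefully, is the barrier comparison of the second step near the cone points: there the metric genuinely degenerates as $t\to 0$ — the conformal factor of $g_0$ is unbounded at $p_i$ — so a neighbourhood of $p_i$ behaves like a shrinking, incomplete domain on which naive parabolic comparison fails, and one must instead work with the precise asymptotics of the model blunt-cone (or, in the zero-angle limit, cusp) solutions, play them off against the curvature lower bound, and push the barriers of section \ref{sectionbarriers} to their sharp form, essentially adapting Topping's cusp analysis in \cite{Top1} to cone angles in $(0,2\pi)$. Getting the barriers to pinch $\tilde g$ exactly onto $g$ near each $p_i$, rather than merely up to a small error, is the quantitatively delicate point on which the whole argument rests.
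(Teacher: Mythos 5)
Your reduction to the scalar equation for $v$ is correct, but the proof has a genuine gap at its central step, and it is precisely the step you yourself flag as the one ``on which the whole argument rests''. You claim that the upper barriers of Section \ref{sectionbarriers} can be ``pushed to their sharp form'' so as to pinch $\tilde g(t)$ exactly onto $g(t)$ on a fixed punctured disc $D_i\setminus\{p_i\}$. This cannot work: Lemma \ref{lema2} produces a one-sided bound $u(t)<B+\frac{\beta}{2(\beta+1)}\ln t$ with non-sharp constants, and no barrier argument yields exact local equality of two solutions of a parabolic equation on a subdomain --- the value of a Ricci flow near $p_i$ at time $t>0$ depends on the whole surface, so one cannot expect $\tilde g\equiv g$ on $D_i$ before knowing it globally. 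Your maximum-principle argument on $\mathcal M'$ then relies on the Dirichlet condition $v=0$ on $\partial\mathcal M'$ coming from this unproved local identity, so the conclusion does not follow. In addition, the inequality you call ``essentially free'', $g_k(t)\le\tilde g(t)$, is not free: to start that comparison you must know that the conformal factor of $\tilde g(t)$ exceeds an \emph{arbitrary} constant $M$ on a small enough neighbourhood of $p_i$ for small enough $t$, so that it dominates the truncated caps $u_k\approx k$ for every $k$. The pointwise bound $\partial_t u=-K\le\Lambda$ that you invoke only yields a \emph{fixed} lower bound $m$ (the paper's Step 1); upgrading $m$ to an arbitrary $M$ is exactly the paper's Step 2 and requires a real argument (an $L^1$ estimate for $\max\{M-u,0\}$ exploiting the curvature lower bound), because a priori $u(\cdot,t)$ could converge to $u_0$ non-uniformly with its value at the origin staying bounded.

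The paper's route avoids both difficulties. After Step 2 (uniform blow-up of the conformal factor of $\tilde g$ near $p_i$ as $t\to0$), it proves only a one-sided comparison: any Ricci flow $\sigma(t)$ smooth at $t=0$ with $\sigma(0)<g_0$ satisfies $\sigma(t)\le\tilde g(t)$. Uniqueness then follows from a parabolic rescaling trick that is absent from your proposal: $\sigma(t):=e^{-2\Lambda t_0}\,\tilde g_1(e^{2\Lambda t_0}t+t_0)$ is a Ricci flow smooth at $t=0$, the curvature lower bound forces $\sigma(0)<g_0$, hence $\sigma(t)\le\tilde g_2(t)$; letting $t_0\to0$ and symmetrizing gives $\tilde g_1=\tilde g_2$. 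This sidesteps any need to identify the two flows locally near the cone points or to make the barriers sharp.
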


\section{Truncating cones}\label{sectiontruncating}
This section is analogous to section 3.3 of \cite{Top1}, where we substitute the cusp points with cone points. Let $D$ denote the unit disc, and $r=|z|$. An appropriate elimination
of the asymptote of the conformal factor at $r=0$ gives rise to a metric which smooth, and no longer singular at the origin.
\begin{lema}\label{lema1}
Let $g_0=e^{2(a_0+\beta \ln r)}|dz|^2$ be a cone metric on the punctured disc $D\setminus\{0\}$ with curvature bounded below, $K[g_0]\geq
-\Lambda$. There exists an increasing sequence of smooth metrics $g_k= e^{2u_k}|dz|^2$ on $D$  such that 
 \begin{enumerate}
  \item $g_k=g_0$ on $D\setminus D_{1/k}$,
  \item $g_k \leq g_0$ on $D\setminus \{0\}$,
  \item $\inf_{D_{1/k}}u_k \rightarrow + \infty$ as $k\longrightarrow +\infty$, and
  \item $K[g_k]\geq \min\{e^2 K[g_0],0\}$.
 \end{enumerate}
\end{lema}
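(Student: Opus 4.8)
The plan is to obtain the metrics $g_k$ by smoothly truncating the conformal factor $u_0:=a_0+\beta\ln r$ at a height $M_k\to+\infty$. Fix once and for all a smooth, non-decreasing, concave function $\Phi\colon\mathbb R\to\mathbb R$ with $\Phi(s)=s$ for $s\le -1$ and $\Phi(s)=0$ for $s\ge 1$ (a regularisation from below of $\min\{\,\cdot\,,0\}$), and for $M\in\mathbb R$ set $\Phi_M(s):=M+\Phi(s-M)$. Then $0\le\Phi_M'\le1$, $\Phi_M''\le0$, $\Phi_M$ is non-decreasing in $M$, $\Phi_M(s)=s$ for $s\le M-1$ and $\Phi_M(s)=M$ for $s\ge M+1$, and $0\le s-\Phi_M(s)\le1$ whenever $s\le M+1$. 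Choosing $M_k:=|\beta|\,\ln k+\|a_0\|_\infty+2$, which is increasing in $k$ and tends to $+\infty$ (here $\beta<0$), we define
\[
u_k:=\Phi_{M_k}\circ u_0,\qquad g_k:=e^{2u_k}|dz|^2.
\]
Since $u_0\to+\infty$ as $r\to0$, near the origin $u_0-M_k\ge1$, so $u_k\equiv M_k$ there; away from the origin $u_0$ is smooth; hence $u_k$ is smooth on all of $D$, and $g_k$ is a genuine smooth metric on $D$, flat near the vertex.

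Properties (1)--(3) follow quickly. From $\Phi_M(s)\le s$ we get $u_k\le u_0$, which is (2); from the $M$-monotonicity of $\Phi_M$ and $M_k$ increasing, $u_k\le u_{k+1}$, so the sequence increases. The choice of $M_k$ ensures $\{u_0\ge M_k-1\}\subset D_{1/k}$ --- indeed $u_0\ge M_k-1$ forces $|\beta|\ln(1/r)\ge|\beta|\ln k+1$, hence $r<1/k$ --- so $u_0\le M_k-1$ on $D\setminus D_{1/k}$ and there $u_k=u_0$, which is (1). Finally, since $u_k\ge\min\{u_0,M_k\}-1$ everywhere and $u_0\ge|\beta|\ln k-\|a_0\|_\infty$ on $D_{1/k}$, we get $\inf_{D_{1/k}}u_k\ge|\beta|\ln k-\|a_0\|_\infty-1\to+\infty$, which is (3).

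The curvature bound (4) is the heart of the matter. On $D\setminus\{0\}$ the metric $g_0$ is smooth and $K[g_0]=-e^{-2u_0}\Delta u_0$; there the chain rule gives
\[
\Delta u_k=\Phi_{M_k}'(u_0)\,\Delta u_0+\Phi_{M_k}''(u_0)\,|\nabla u_0|^2\ \le\ \Phi_{M_k}'(u_0)\,\Delta u_0\ \le\ \max\{\Delta u_0,\,0\},
\]
by $\Phi_{M_k}''\le0$ and $0\le\Phi_{M_k}'\le1$. Split $D\setminus\{0\}$ according to $u_0$. On $\{u_0\le M_k-1\}$ (which contains $D\setminus D_{1/k}$) we have $u_k=u_0$, so $K[g_k]=K[g_0]\ge\min\{e^2K[g_0],0\}$, trivially since $e^2>1$. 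On $\{u_0\ge M_k+1\}$ (a punctured neighbourhood of the vertex) we have $u_k\equiv M_k$, so $K[g_k]=0\ge\min\{e^2K[g_0],0\}$. On the collar $\{M_k-1<u_0<M_k+1\}$ we have $0\le u_0-u_k\le1$; if $\Delta u_0\le0$ then $K[g_k]=-e^{-2u_k}\Delta u_k\ge-e^{-2u_k}\max\{\Delta u_0,0\}=0$, while if $\Delta u_0>0$, equivalently $K[g_0]<0$, then
\[
K[g_k]=-e^{-2u_k}\Delta u_k\ \ge\ -e^{-2u_k}\Delta u_0\ =\ e^{2(u_0-u_k)}K[g_0]\ \ge\ e^{2}K[g_0]\ =\ \min\{e^2K[g_0],0\}.
\]
This proves (4).

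The one genuine obstacle is the exact shape of (4): one cannot keep $\Delta u_k$ uniformly bounded, since $\Delta u_0$ itself already blows up at the vertex whenever $K[g_0]<0$ nearby (e.g.\ on a hyperbolic cone). The truncation circumvents this on two fronts: near the vertex $g_k$ is made literally flat, so its curvature is controlled no matter how $\Delta u_0$ behaves; and on the thin collar where the cut-off acts, the conformal factor has moved by at most $1$, which is precisely what downgrades a pointwise value $K[g_0]<0$ to the admissible $e^{2}K[g_0]$. Pushing $M_k\to\infty$ while keeping the collar inside $D_{1/k}$ produces (1) and (3) simultaneously, and $M$-monotonicity of $\Phi_M$ yields the increasing sequence. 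The details I have only indicated --- the elementary bounds for $\Phi$ and $\Phi_M$, and the fact that, $g_0$ being smooth away from the vertex and $u_k$ constant near it, every manipulation with $\Delta u_0$ and $\nabla u_0$ takes place on the open set $D\setminus\{0\}$ --- are routine.
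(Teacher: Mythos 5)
Your proof is correct and follows essentially the same route as the paper's: a smoothed truncation of the conformal factor by a concave cutoff, the same chain-rule estimate $\Delta u_k\le\max\{\Delta u_0,0\}$, and the bound $u_0-u_k\le 1$ on the transition collar as the source of the factor $e^2$ in (4). The only difference is that you calibrate the truncation heights $M_k$ explicitly so that the modified region lies inside $D_{1/k}$, whereas the paper truncates at height $k$ and obtains (1) by passing to a subsequence; your version is marginally cleaner and spells out the final curvature inequality that the paper leaves terse.
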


\begin{proof}
The conformal factor $u_0=a_0+\beta \ln r$ of the cone metric tends to $+\infty$ as
$r\rightarrow 0$, so for each $k\in \mathbb N$ we pick
the minimum of $u_0$ and $k$ to obtain an increasing sequence of bounded functions tending to $u_0$. This has to be done in a way such
that the functions remain smooth.

Choose a smooth function $\psi : \mathbb R \rightarrow \mathbb R$ such that 
\begin{itemize}
 \item $\psi(s)=s$ for $s\leq -1$,
 \item $\psi(s)=0$ for $s\geq 1$,
 \item $\psi'\geq 0$ and $\psi''\leq 0$.
\end{itemize}
The smoothed minimum of $u_0$ and $k$ is
$$u_k=\psi(u_0-k)+k$$
and satisfies
\begin{itemize}
 \item If $u_0\geq k+1$ then $u_k=k$ and therefore $K[g_k]=0$.
 \item If $u_0\leq k-1$ then $u_k=u_0$ and therefore $K[g_k]=K[g_0]$.
 \item If $k-1<u_0<k+1$ then \begin{itemize}
                              \item[\textbullet] $u_k\leq u_0$,
			      \item[\textbullet] $u_k\leq k$,
			      \item[\textbullet] $u_k \geq u_0-1$.
                             \end{itemize}
\end{itemize}
So 
$$u_k\leq \min\{u_0,k\}$$
and then (2) and (3) are satisfied. We can compute
$$\Delta u_k = \psi''(u_0-k)|\nabla u_0|^2 + \psi'(u_0-k)\Delta u_0 \leq \psi'(u_0-k)\Delta u_0$$
Now, since $\psi'\geq 0$, we can distinguish 
$$\Delta u_k \leq \Delta(u_0)\ \mathrm{if}\ \Delta u_0 >0$$
or
$$\Delta u_k\leq 0 \ \mathrm{if}\ \Delta u_0 \leq 0$$
So
$$\Delta u_k \leq  \max \{\Delta u_0, 0\}$$
and then
$$K[g_k]=-e^{-2u_k}\Delta u_k \geq \min\{e^2 K[g_0],0\}$$
so (4) is satisfied. Finally (1) is satisfied after passing to a subsequence, since the region of points $\{z:u_0(z)>k\}$ shrinks to a point when $k\rightarrow \infty$. 
\end{proof}

\section{Upper barriers}\label{sectionbarriers}
The conformal factor of a cone surface possesses asymptotes at the coordinates of the cone points, whereas the truncated approximations have a finite but probably big value on that coordinates. This section provides a ratio of how fast the maximum value of this conformal factors decay as the Ricci flow evolves.

\begin{lema}\label{lema2}
 Let $g(t)=e^{2u(t)}|dz|^2$ be a smooth Ricci flow on $D$ and $t\in [0,\delta]$, and assume that
 $$u(0)\leq A + \beta \ln r$$
 for some $A\in \mathbb R$. Then 
 $$u(t)< B + \frac{\beta}{2(\beta+1)}\ln t$$
for some $B$ depending only on $A$ and $\beta$.
\end{lema}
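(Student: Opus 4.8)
The plan is to prove the estimate by comparing $u$ with an explicit upper barrier via the maximum principle; the barrier will be a parabolic rescaling of the conformal factor of the rotationally symmetric expanding soliton that instantaneously smooths out a flat cone of angle $2\pi(\beta+1)$. Recall that in the chart the flow is $\partial_t u = e^{-2u}\Delta u$ with $\Delta$ the Euclidean Laplacian, an equation invariant under the parabolic rescaling $(z,t)\mapsto(\sigma z,\sigma^2 t)$.

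\textbf{The model soliton.} Let $g_*(t)=e^{2u_*(t)}|dz|^2$ be the Ricci flow on $\mathbb C$ ($t>0$) that is rotationally symmetric, smooth, has $K[g_*(t)]\ge 0$, is asymptotic at infinity to the flat cone $(\beta+1)^2 r^{2\beta}|dz|^2$, and has that cone as its $t\to 0$ limit; its existence and the properties below come from the rotationally symmetric soliton ODE (this is the cone analogue of the cusp-smoothing solution used by Topping, and sits inside the cited literature on Ricci flow on $\mathbb R^2$). Since the conformal class is preserved along the flow, $z$ is an isothermal coordinate for all $g_*(t)$; since $g_*$ is self-similar and the diffeomorphism relating two time-slices is a conformal automorphism of $\mathbb C$, hence affine, one gets the explicit form
$$u_*(z,t)=\frac{\beta}{2(\beta+1)}\ln t + u_1\!\Big(t^{-\frac{1}{2(\beta+1)}}z\Big),\qquad u_1:=u_*(\cdot,1),$$
where $u_1$ is a fixed smooth radially non-increasing function with $u_1(w)-\beta\ln|w|\to\ln(\beta+1)$ as $|w|\to\infty$. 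In particular $K\ge 0$ forces $u_*(\cdot,t)$ radially non-increasing, so $\sup_{\mathbb C}u_*(\cdot,t)=u_*(0,t)=\tfrac{\beta}{2(\beta+1)}\ln t+u_1(0)$.

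\textbf{Rescaling and comparison.} Given $A$, set $\bar u(z,t):=u_*(\sigma z,\sigma^2 t)$ with $\sigma:=\exp\!\big(\tfrac{A-\ln(\beta+1)}{\beta}\big)>0$; by scaling invariance $\bar u$ is again the conformal factor of a smooth Ricci flow. From the formula above, as $t\to 0$ one has $\bar u(\cdot,t)\to A+\beta\ln r$ off the origin (the $\ln t$ terms cancel, which is precisely why the exponent is $\tfrac{\beta}{2(\beta+1)}$), while $\bar u(0,t)\to+\infty$; and, $u_*(\cdot,s)$ being radially non-increasing,
$$\sup_{D}\bar u(\cdot,t)=\bar u(0,t)=\frac{\beta}{2(\beta+1)}\ln t + B,\qquad B:=\frac{A-\ln(\beta+1)}{\beta+1}+u_1(0),$$
so $B$ depends only on $A$ and $\beta$. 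It remains to prove $u\le\bar u$ on $D\times(0,\delta]$. Put $v:=u-\bar u$; subtracting the two flow equations and writing $e^{-2u}-e^{-2\bar u}=-2e^{-2\xi}(u-\bar u)$ (mean value theorem, $\xi$ between $u$ and $\bar u$) yields the linear parabolic equation $\partial_t v=e^{-2u}\Delta v+c\,v$ with $c:=-2e^{-2\xi}\Delta\bar u$ bounded on the relevant compact sets; since $\bar u(\cdot,0^+)=A+\beta\ln r\ge u(\cdot,0)$ off the origin (and $\bar u(0,t)\to+\infty$), we have $v\le 0$ at $t=0$, so the maximum principle gives $v\le 0$, hence $u(z,t)\le\bar u(z,t)\le\tfrac{\beta}{2(\beta+1)}\ln t+B$, with strict inequality for $t>0$ by the strong maximum principle.

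\textbf{Main obstacle.} The point needing the most care is the behaviour on the boundary circle $\{|z|=1\}$, where $\bar u$ is finite, so the maximum principle also needs $v\le 0$ there. In the intended application $g(t)$ is the restriction to a coordinate disc of a Ricci flow on a closed surface agreeing at $t=0$ with the fixed cone metric near the chart boundary, and interior parabolic estimates then bound $u$ near $\partial D$ by a constant depending only on $A,\beta$, which can be absorbed into $B$ — alternatively one enlarges $\bar u$ near $\partial D$ by a supersolution (a large constant, or a rescaled shrinking hyperbolic metric) and argues on a slightly smaller disc. The limit $t\to 0$ in the maximum principle is routine given the ordering at $t=0$ and the smoothness of $u,\bar u$ on $(\overline D\setminus\{0\})\times(0,\delta]$. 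The remaining items — existence, smoothness, non-negativity of curvature and conical asymptotics of $g_*$, the affineness of the self-similarity diffeomorphism, and the boundedness of $c$ — are standard.
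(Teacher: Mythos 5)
Your approach is genuinely different from the paper's: you take as upper barrier a parabolic rescaling of the rotationally symmetric expanding soliton coming out of the flat cone of angle $2\pi(\beta+1)$, whereas the paper builds a completely explicit, piecewise barrier $U$ by gluing a shrinking spherical cap of radius $\lambda(t)\sim t^{\frac{1}{2(\beta+1)}}$ onto the \emph{hyperbolic} cone $v_1(r)=\ln(2(\beta+1))+\beta\ln r-\ln\bigl(1-r^{2(\beta+1)}\bigr)$ and verifies the supersolution inequality by hand. Your scaling computation is correct and does produce the exponent $\frac{\beta}{2(\beta+1)}$ and a constant $B=B(A,\beta)$, so the self-similar heuristic is exactly the same one that motivates the paper's choice of $\lambda(t)$.

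However, the issue you flag as the ``main obstacle'' is in fact a genuine gap, and it is precisely the point the paper's choice of barrier is engineered to handle. Your barrier $\bar u$ is finite on $\partial D$ (indeed $\bar u\leq A+\beta\ln r\leq A$ there, since $K[g_*]\geq 0$ forces $\partial_t u_*\leq 0$), while the lemma assumes nothing about $u(t)$ near $\partial D$ for $t>0$: with no lower curvature bound, $\partial_t u=-K$ can push $u$ above any constant near the boundary, so the comparison $u\leq\bar u$ cannot be closed on $D$. Neither of your proposed repairs works as stated: invoking the global flow on the closed surface changes the hypotheses of the lemma (and interior estimates do not yield an upper bound on $u$ near $\partial D$ from the initial bound alone), and adding a large constant to $\bar u$ does not give a supersolution of $\partial_t w=e^{-2w}\Delta w$ (since $\Delta\bar u\leq 0$, the shifted function is a \emph{sub}solution) nor would any constant dominate a potentially unbounded $u$. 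The correct fix is to build into the barrier a term blowing up at $r=1$; this is exactly the role of $-\ln\bigl(1-r^{2(\beta+1)}\bigr)$ in the paper's $v_1$, together with the observation that the evolving hyperbolic cone $V_1(t)=v_1+c(t)$ is itself a flow that $u$ cannot overtake. Carrying out the gluing of such a boundary shield to your soliton cap while preserving the supersolution property is the nontrivial content you have deferred. A secondary weakness is that the existence, positivity of curvature, radial monotonicity and conical asymptotics of the expander $g_*$ are asserted rather than proved or precisely cited, whereas the paper's barrier is elementary and self-contained.
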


\begin{proof}
We will consider the conformal factor of several different surfaces.
The function 
$$s(r):=\ln\left(\frac{2}{1+r^2}\right)$$ 
is the conformal factor of a sphere, and the functions
$$v_0(r):=\ln(2(\beta+1))+\beta\ln r$$
$$v_1(r):=\ln(2(\beta+1))+\beta\ln r - \ln\left(1-r^{2(\beta+1)}\right)$$
are the conformal factors of euclidean and hyperbolic cones (curvature $0$ and $-1$) respectively. Note that the euclidean and hyperbolic cones become indistinguishable as $r\rightarrow 0$.

Considering the Ricci flow ($\frac{\partial u}{\partial t} = e^{-2u}\Delta u=-K$) on the hyperbolic cone, it evolves as
$$V_1(t)=v_1 + c(t)$$
with $c(t)$ an increasing function, so comparing with say $t=1$, we have
$$V_1(t)<v_1 + C$$
for some constant $C$ and for all $0<t<1$.

The function $s\left(\frac{r}{c_1}\right) + c_2$
is the conformal factor of a rescaled sphere (in parameter and in metric). We define
$$U(r,t):=\left\{ 
\begin{array}{lc} 
S(r,\lambda(t)):=s\left(\frac{r}{\lambda}\right) + v_1(\lambda) + C & \mathrm{if}\ 0<r\leq \lambda \\
v_1(r) + C & \mathrm{if}\ \lambda<r<1 
\end{array}\right.
$$
where $\lambda= \lambda(t)$ is a function of $t$ to be determined. Geometrically, $U$ is the conformal factor of a piecewise smooth
metric, a hemisphere near the origin and a cone with constant negative curvature away from it. It is a kind of ``blunt cone'', the transition being at
coordinate
$r=\lambda(t)$. We
still have to determine $\lambda(t)$, but we will require it to tend to $0$ as $t\rightarrow 0$.
In order to prove the lemma we will see (a) $u \leq U$ and (b) $\sup U(\cdot,t) \leq B + \frac{\beta}{2(\beta+1)}\ln t$.

We prove (a). We can assume that $u(0)<v_1+C$, and we know that at $r=0$ the value of $u$ is finite. Since the capping of $S(r,\lambda)$ occurs at arbitrarily big values, it is also true that $u(0)<U(0)$. Indeed, for $0<r\leq \lambda$, we have $S(r,\lambda)\geq v_1(\lambda)+C \rightarrow +\infty$ as $t\rightarrow 0$ since
$\lambda\rightarrow 0$. So $u< U$ for small positive $t$.

Suppose that for some $t_0$ there is a $0<r_0<1$ such that $u(r_0,t_0)=U(r_0,t_0)$. We can assume $t<1$. Note that the asymptote of $U$ at $r=1$ avoids the case of $r_0=1$.
If the point occurs at $\lambda\leq r_0 <1$, then $u$ would be touching the upper barrier of $V_1(t)$, which is impossible since by the maximum principle $u$ cannot pass over $V_1$.

Assume then that $0<r_0<\lambda$. We have $U-u\geq 0$ for $0\geq t \geq t_0$ and
$$u(r_0,t_0)=U(r_0,t_0),\qquad \frac{\partial}{\partial t}(U-u)\bigg|_{r_0,t_0}\leq 0,\qquad \Delta(U-u)\bigg|_{r_0,t_0}\geq 0$$
so at $(r_0,t_0)$
$$0\geq \frac{\partial U}{\partial t} - \frac{\partial u}{\partial t} = \frac{\partial U}{\partial t} - e^{-2u}\Delta u = \frac{\partial
U}{\partial t} + e^{-2U}(\Delta(U-u)-\Delta U) \geq \frac{\partial U}{\partial t} - e^{-2U}\Delta U$$
so
$$\frac{\partial U}{\partial t} \leq e^{-2U}\Delta U.$$
We now choose $\lambda(t)$ properly to contradict this assertion. 
On the one hand, at $(r_0,t_0)$
$$\frac{\partial U}{\partial t} = \frac{\partial S}{\partial t} = \left( - s'\left(\frac{r}{\lambda}\right) \frac{r}{\lambda^2} +
\frac{\beta}{\lambda} + \frac{2(\beta+1)\lambda^{2(\beta+1)}}{(1-\lambda^{2(\beta+1)})\lambda}\right) \frac{\partial \lambda}{\partial t} \geq \frac{\beta}{\lambda}\frac{\partial \lambda}{\partial t}$$
since $s'(r)<0$. 
On the other hand, one can compute
$$e^{-2U}\Delta U = e^{-2S}\Delta S = -\frac{\lambda^{-2(\beta+1)}}{(\beta+1)^2} \frac{e^{-2C}}{4} (1-\lambda^{2(\beta+1)})^2.$$
Ignoring the negligible term tending to zero (geometrically, assuming a flat cone), one can guess a critical value of $\lambda$ by solving
$$\frac{\beta}{\lambda}\frac{\partial \lambda}{\partial t} = -\frac{\lambda^{-2(\beta+1)}}{(\beta+1)^2} \frac{e^{-2C}}{4}$$
e.g. with the solution
$$\lambda(t)=\left(\frac{-te^{-2C}}{2\beta(\beta+1)}\right)^{\frac{1}{2(\beta+1)}}.$$
A slight modification, say
$$\bar\lambda(t)=\left(\frac{-te^{-2C}}{4\beta(\beta+1)}\right)^{\frac{1}{2(\beta+1)}},$$
gives
$$\frac{\partial S}{\partial t}(r,\bar\lambda) \geq
\frac{\beta}{\bar\lambda}\frac{\partial \bar\lambda}{\partial t} =
\frac{1}{2}\frac{\beta}{(\beta+1)t} >
\frac{\beta}{(\beta+1)t} \left(1+\frac{te^{-2C}}{4\beta(\beta+1)}\right)=
e^{-2S(r,\bar\lambda)}\Delta S(r,\bar\lambda),$$
giving a contradiction as long as $C$ is big enough. Therefore there is no such time $t_0$ and so $u\leq U$.

Now we prove (b). We use the $\lambda=\bar \lambda$ just found. It is easy to check that $S(r,t)$ is nonincreasing and has a maximum at
$r=0$. Its value is
$$S(0,\bar\lambda(t))=\ln(4(\beta+1)) + \beta \ln\left(\left(\frac{-t}{4(\beta+1)\beta}\right)^{\frac{1}{2(\beta+1)}}\right) -\ln\left( 1+ \frac{te^{-2C}}{4\beta(\beta+1)}\right) \leq B + \frac{\beta}{2(\beta+1)}\ln t.$$
\end{proof}

\section{Proof of the existence Theorem}\label{sectionexistproof}
We now prove the theorem \ref{tma1}

\begin{proof}
For simplicity assume there is just one cone point $p$. We take isothermal coordinates $z$ on a neighbourhood of $p$ such that $p$
corresponds to $z=0$, and $z\in D$ the unit disc (rescaling parameter and metric if necessary), so the metric on this chart has the form
$$g_0=e^{a+\beta\ln r}|dz|^2$$
with $a:D\rightarrow \mathbb R$ a bounded continuous function.

We truncate the metric $g_0$ as in Lemma \ref{lema1} and we obtain an increasing sequence of smooth metrics $g_k$ on $\mathcal M$ such that:
\begin{enumerate}
  \item $g_k=g_0$ on $D\setminus D_{1/k}$,
  \item $g_k \leq g_0$ on $D\setminus \{0\}$,
  \item $\inf_{D_{1/k}}u_k \rightarrow + \infty$ as $k\longrightarrow +\infty$,
  \item $K[g_k]\geq \min\{e^2 K[g_0],0\}$ and
 \item $g_k\leq g_{k+1}$.
 \end{enumerate}
We apply Ricci flow to each initial metric $g_k$ and obtain a sequence of flows $g_k(t)$. There exist a uniform $T>0$ such that all flows
$g_k(t)$ are defined for $t\in[0,T]$. Indeed, by \cite{ChowKnopf} in dimension 2, if $\chi(\mathcal M)<2$ the flow is defined for
$t\in[0,\infty)$, and if $\chi(\mathcal M)=2$ the flow is defined for $t\in[0,\frac{\mathrm{Area}(\mathcal M)}{8\pi})$, and as $g_k\leq g_{k+1}$, then $\mathrm{Area}_{k}\leq\mathrm{Area}_{k+1}$.
So in any case the area does not tend to zero.

By the maximum principle, the initial $g_k(0)\leq g_{k+1}(0)$ implies $g_k(t)\leq g_{k+1}(t)$
and again by the maximum principle, $K[g_k(0)]\geq -\Lambda$ implies $K[g_k(t)]\geq -\Lambda $.

There exists also $g_s(t)$, the Yin's Ricci flow on $\mathcal M\setminus\{p\}$, and since $g_k(0)\leq g_0 = g_s(0)$, by the maximum principle we have
$$g_k(t) \leq g_{k+1}(t) \leq g_s(t),$$
so we can define the limit flow
$$G(t)=\lim_{k\rightarrow \infty} g_k(t).$$

On any chart not containing $p$, the flow $G(t)$ is smooth by the uniform bounds of $g_k$ and the parabolic regularity theory. We need to
ensure that $G(t)$ extends smoothly to $p$. It is enough to show that the conformal factor of $G(t)$ in a
neighbourhood of $p$ does not tend to $\infty$ for $t>0$. We use the Lemma \ref{lema2}. Say $G(t)=e^{2v(t)}|dz|^2$, then
$$v(t)=\lim_{k\rightarrow \infty} u_k(t) \leq C + \frac{\beta}{2(\beta+1)}\ln t$$
so $v(t)<+\infty$ for all $t>0$.
Furthermore, the uniform lower bound of the curvature on the approximant terms $g_k(t)$ also passes to the limit, so $K[G(t)] > -\Lambda$.
\end{proof}

\section{Uniqueness}\label{sectionuniqueproof}
The uniqueness issue is parallel to Topping's cusps, so we will sketch the proof and refer to \cite{Top1} for a detailed completion. Although there are two Ricci flows with a cone surface as initial metric, say Yin's flow and our constructed smoothening flow, Yin's flow is unique amongst the bounded curvature, cone-singular flows; and our flow is unique amongst the lower-bounded curvature, instantaneously-smooth flows.

\begin{proof}(Theorem \ref{tma2}) Recall that $\tilde g(t)$ is a Ricci flow defined on $\mathcal M$ for $t\in(0,\delta]$, with curvature uniformly bounded below, and such that $\tilde g(t)\rightarrow g_0$ as $t\rightarrow 0$. We want to show that it is unique. The proof consists in 4 steps:
\begin{step}
There exists a neighbourhood $\Omega$ of $p_i$, where the metric is written $\tilde g(t)=e^{2u}|dz|^2$, and there exists
$m\in \mathbb R$ such that $$u\geq m$$
in $\Omega$ for $t\in (0,\frac{\delta}{2}]$.
\end{step}
This step makes use of the lower curvature bound. Since $\frac{\partial u}{\partial t} = e^{-2u}\Delta u = -K[\tilde g] < \Lambda$, we have
$$u(z,t)\geq u(z,\frac{\delta}{2})-\Lambda\left(\frac{\delta}{2}-t\right)\geq \inf_{\Omega}u(\cdot,\frac{\delta}{2})-\Lambda\frac{\delta}{2}=:m.$$

\begin{step}
Actually, for every $M<\infty$, there is a small enough neighbourhood $\Omega_1$ and a small enough time $\delta_1$ such that
$$u\geq M$$
in $\Omega_1$ for $t\in (0,\delta_1)$.
\end{step}
This bound is obviously true for the conformal factor $u_0$ of the metric $g_0$, since $u_0=a+\beta\ln r$ has an asymptote on $r=0$. However, it is not clear that the factors $u(t)$ of the metrics $\tilde g(t)$ remain bounded by an arbitrary constant on a small neighbourhood for small $t$. It might happen that the functions $u(t)\rightarrow u_0$ as $t\rightarrow 0$ with $u(t)$ fixed at $r=0$ (that is, non-uniform convergence); but this case would contradict the uniform bounded below curvature. The sketch of the proof is as follows.

Define the family of functions $h(t)=\max\{M-u(t),0\}$, and the goal is proving that $h(t)\equiv 0$ for all $t<\delta_1$. We do that by showing that its $L^1$ norm on some small disc, $||h(t)||=\int_{D_\epsilon} |h(t)| d\mu$, vanishes. For, on the one hand $||h(t)||\rightarrow 0$ as $t\rightarrow 0$, since
$$||h(t)||=\max\{M-u(t),0\}\rightarrow \max\{M-u(0),0\} =0$$
because $u_0>M$. On the other hand, we claim that $\frac{d}{dt}||h(t)||\leq 0$, what proves the result. In order to prove that claim, we change the functions $h(t)$ by a smoothed version of the maximum, in a similar fashion we did in the proof of Lemma \ref{lema1}, that is $\hat h_\rho (t)=\Psi_\rho(M-u)\rightarrow h(t)$ as $\rho\rightarrow 0$. This allows us to compute $\frac{d}{dt}||\hat h_\rho(t)||$ in terms of the derivatives of the controlled function $\Psi_\rho$, the lower bound on $u(t)$ given by the previous step, and the lower curvature bound. See \cite{Top1} for the details.

\begin{step}
With the lower bound of $u$, we can compare the flow $\tilde g(t)$ (which is conical at $t\rightarrow 0$) with
any Ricci flow smooth at $t=0$. Let $\sigma(t)$ be a smooth Ricci flow on $\mathcal M$ and $t\in[0,\delta]$. If 
$\sigma(0) <  g_0\ \mathrm{on}\ \mathcal M\setminus\{p_1,\ldots,p_n\}$, then $\sigma(t)\leq \tilde g(t)\ \mathrm{on}\ \mathcal M \ \forall t\in(0,\delta]$.
\end{step}
This step is essentially an application of the maximum principle. Let $s$ be the conformal factor of $\sigma(0)$. Since it is bounded, there exists an $M$ and (by the previous step) a neighbourhood $\Omega$ of the cone points such that $s\leq M\leq u$ for a small time $t<t_1$ on $\Omega$. But since $\tilde g(t)\rightarrow g_0$ and $\sigma(0)<g_0$, for an even smaller time $t<t_2$ we have $\sigma \leq \tilde g$ on the whole $\mathcal M$. Having stablished the inequality for a positive time the maximum principle gives it for any time $t\in(0,\delta)$.

\begin{step}
Comparing two smoothening Ricci flows $\tilde g_1(t)$, $\tilde g_2(t)$ on $t\in(0,\delta]$ with initial metric $g_0$ and
curvature uniformly bounded below, a parabolic rescaling of one of them makes it a smooth Ricci flow even at $t=0$, so it is smaller or
equal than the other. By symmetry, also the other is smaller or equal than the one, so they are identical. 
\end{step}
The point is picking a small $t_0>0$ and the bound $K[\tilde g_1(t)]\geq -\Lambda$. We define a rescaling of $\tilde g_1(t)$ as
$$\sigma(t):=e^{-2\Lambda t_0}\ \tilde g_1(e^{2\Lambda t_0}\ t +t_0)$$
for $t\in [0,(\delta-t_0)e^{-2\Lambda t_0})$. This is a smooth Ricci flow even at $t=0$, and by the lower curvature bound it satisfies $\sigma(0) < g_0$, so by the previous step
$\sigma(t)\leq \tilde g_2(t)$. But moving $t_0\rightarrow 0$ one gets
$\tilde g_1(t) \leq \tilde g_2(t)$
and by symmetry, also 
$\tilde g_2(t) \leq \tilde g_1(t)$.

\end{proof}


\providecommand{\bysame}{\leavevmode\hbox to3em{\hrulefill}\thinspace}
\providecommand{\MR}{\relax\ifhmode\unskip\space\fi MR }
\providecommand{\MRhref}[2]{%
  \href{http://www.ams.org/mathscinet-getitem?mr=#1}{#2}
}
\providecommand{\href}[2]{#2}

\end{document}